\def\Z{\mathbb{Z}}
\newtheorem{proposition}{Proposition}
\newtheorem{theorem}{Theorem}
\newtheorem{lemma}{Lemma}
\newtheorem{corollary}{Corollary}
\begin{document}

\title{On disjoint $(v,k,k-1)$ difference families}

\author{Marco Buratti \thanks{Dipartimento di Matematica e Informatica, Universit\`a di Perugia, via Vanvitelli 1 - 06123 Italy, email: buratti@dmi.unipg.it}}

\maketitle
\begin{abstract}
\noindent
A disjoint $(v,k,k-1)$ difference family in an additive group $G$ is a partition of $G\setminus\{0\}$ 
into sets of size $k$ whose lists of differences cover, altogether, every non-zero element of $G$ exactly $k-1$ times. 
The main purpose of this paper is to get the literature on this topic in order, since some authors seem to be unaware of each other's work. 
We show, for instance, that a couple of heavy constructions recently presented as new, had been 
given in several equivalent forms over the last forty years. We also show that they can be quickly derived 
from a general nearring theory result  which probably passed unnoticed by design theorists and that
we restate and reprove in terms of {\it differences}.
We exploit this result to get an infinite class of disjoint $(v,k,k-1)$ difference families coming from the Fibonacci sequence.
Finally, we will prove that if all prime factors of $v$ are congruent to 1 modulo $k$, then there exists a 
disjoint $(v,k,k-1)$ difference family in every group, even non-abelian, of order $v$.
\end{abstract}

\noindent \small{\bf Keywords:} \scriptsize
disjoint difference family; zero difference balanced function; Frobenius group; Ferrero pair; Pisano period.

\normalsize
\eject
\section{Introduction}

Throughout this paper all groups will be understood finite and written in additive notation but not necessarily abelian.

Given a subset $B$ of a group $G$, the {\it list of differences of $B$}
is the multiset $\Delta B$ of all possible differences between two distinct elements of $B$.
A collection $\cal F$ of subsets of $G$ is a {\it difference family} (DF) of index $\lambda$ if the multiset sum
$\Delta{\cal F}:=\displaystyle\biguplus_{B\in{\cal F}}\Delta B$ covers every non-zero element of $G$ 
exactly $\lambda$ times. In particular, one says that ${\cal F}$ is a $(v,k,\lambda)$-DF if $G$ has 
order $v$ and its members ({\it base blocks}) have all size $k$.
A difference family is said to be {\it disjoint} (DDF) if its blocks are pairwise disjoint. It is a
{\it partitioned difference family} (PDF) if its blocks partition $G$.

Partitioned difference families, introduced by Ding and Yin \cite{DY} for the construction
of {\it optimal constant composition codes}, are also important from the
design theory perspective; for instance, in \cite{BYW} it is shown a strict connection between PDFs having
all blocks of the same size $k$ but one of size $k-1$ and certain {\it resolvable $2$-designs}  (RBIBDs) with block size $k$. It seems that this paper
passed almost completely unnoticed in spite of the fact that it contains many  new RBIBDs a couple of which are particularly remarkable 
since their parameters are new; a $(45,5,2)$-RBIBD and a $(175,7,2)$-RBIBD.
The importance of the former was pointed out in the paper itself, here we underline
the importance of the latter considering that, according to Table 7.40 in \cite{AGY}, even the
existence of a $(175,7,6)$-RBIBD was previously in doubt while it is now obvious that it can be obtained 
by simply tripling the obtained $(175,7,2)$-RBIBD.

After the above discussed paper the notion of a PDF apparently disappeared from the literature for a long time but, as a matter of fact, it has been considered
under the name of a {\it zero difference balanced function} (ZDBF). A  function $f$ from a group $G$ to a group $H$ is 
defined to be a $(v,\lambda)$-ZDBF if $ord(G)=v$ and the equation $f(g+x)=f(x)$ in the unknown $x$ has always $\lambda$ solutions whichever is $g\in G\setminus\{0\}$.
It is an easy exercise to prove that this is completely equivalent to say that the set of non-empty {\it fibers} of $f$ form a PDF in $G$. 
It seems to be usual to assume that both $G$ and $H$ are abelian (see, e.g., \cite{WZ} and \cite{ZTWY}) but, in our opinion, there is no good reason to make this restriction.

% Many  constructed ZDB-functions have fibers of size 1 so that their correspondent  PDFs
% are not so relevant from the design theory perspective.

Very recently PDFs have returned with their original name in a paper  \cite{LWG} where the authors develop the 
composition constructions of \cite{BYW} making use of {\it difference matrices}.

It is clear that every DDF can be ``completed" to  a PDF by adding suitable blocks of size 1.
We note that a $(v,k,\lambda)$-DDF necessarily has $1\leq \lambda\leq k-1$ apart the very trivial
case of a $(k,k,k)$ difference set.
The existence of $(v,k,1)$-DDFs is in general a quite hard problem. Among the few results on this problem 
we recall that Dinitz and Rodney \cite{DR} found a $(v,3,1)$-DDF for any admissible $v$ and that 
any {\it radical} $(v,k,1)$-DF (see \cite{Bpairwise}) is disjoint when $k$ is odd.
On the contrary, the literature on $(v,k,k-1)$-DDFs is quite rich and our main purpose is to
get this literature in order.

First of all it is worth mentioning that the $G$-orbit of any  a $(v,k,k-1)$-DDF in $G$ is the {\it near resolution}
of a $(v, k, k-1)$ {\it near resolvable design} (NRB for short). We refer to \cite{AGY} for general background on NRBs.

In the core of this paper we restate and reprove in terms of differences an old nearring theory result - probably passed almost unnoticed by other theorists -
which, starting from {\it Ferrero pairs}, implicitly give a wide class of $(v,k,k-1)$-DDFs in the kernel of a {\it Frobenius group}.

We will show that several constructions for $(v,k,k-1)$-DDFs obtained over the years could be quickly 
obtained as a corollary of that result. In order to further appreciate its effectiveness we apply it to get 
some new DDFs, in particular the Pisano $(p^4,k,k-1)$-DDFs in $\Z_{p^2}\times\Z_{p^2}$ which
arise from the Fibonacci sequence. In the last section we give an example of an infinite class of non-abelian DDFs obtainable 
via the Ferrero construction and, more importantly, we will prove that if all prime factors of $v$ are congruent to 1 modulo $k$,
then there exists a $(v,k,k-1)$-DDF in any group of order $v$.

\section{Some known results}

% As observed in \cite{BYW}, any disjoint DF can be viewed as a partitioned DF; it is enough to complete the given family with 
% all singletons which are to covered by its blocks. 
% Let us say that a DF is {\it proper} if all its blocks have size greater than 1.
% Considering that the list of differences of a set of size 1 is empty, it is clear that every proper 
% disjoint $(G,[^{\mu_1}k_1,\dots,\,^{\mu_t}k_t],\lambda)$-DF gives rise to a $(G,[^{\mu_1}k_1,\dots\,^{\mu_t}k_t, \,^\nu1],\lambda)$-PDF 
% where $\nu=|G|-\sum_{i=1}^t \mu_ik_i$; it is enough to complete the given disjoint DF with all 
% singletons $\{g\}$ not covered by its blocks. In this way we have, for instance, a cyclic $(6n+1,[^n3,\,^{3n+1}1],1)$-PDF for any integer $n$
% by \cite{DR}. Also, any {\it radical} $(q,k,1)$-DF with $k$ odd  (see \cite{Bpairwise})
% gives a $(q,[^\mu k, \,^{\nu}1],1)$-PDF with $\mu={q-1\over k^2-k}$ and $\nu={kq-2q+1\over k-1}$. 
%
% Anyway blocks of size 1 are in general not allowed in design theory, hence we could say that {\it improper} PDFs are less interesting than the proper ones.
% We point out, however, that some strictly disjoint DFs may lead to proper PDFs by means of a suitable composition construction illustrated in \cite{BYW}
% (Theorem 6.2) which is actually more general than the later main construction in \cite{LWG} (Construction 3).
 %
% Now we consider PDFs with exactly one block of size 1 and all the others of the same size $k>1$, hence necessarily having $\lambda=k-1$.

A $(v,k,k-1)$-DDF in $G$ is known in each of the following cases.
\begin{itemize}
\item[(i)] $v$ is odd, $k=2$, and $G$ is any group of order $v$. 
\item[(ii)] $v\equiv1$ (mod 4), $k=4$, $G=\Z_{v}$ and there exists a {\it $\Z$-cyclic whist tournament} on $v$ players $($briefly $Wh(v))$.
\item[(iii)] $v\equiv1$ (mod $k$) is a prime power and $G$ is the additive group of $\mathbb{F}_v$ (the field  of order $v$).
\item[(iv)] The maximal prime power divisors of $v$ are all congruent to 1 (mod $k$)
and $G$ is a direct product of elementary abelian groups.
\item[(v)] 
All prime factors of $v$ are congruent to 1 (mod $k$) and $G=\mathbb{Z}_v$.
\end{itemize}

A DDF as in (i) is nothing but a  {\it starter} of $G$ (see, e.g., \cite{D}). 

The reason of (ii) is that the {\it initial round} of a $\Z$-cyclic $Wh(v)$ 
is a $(v,4,3)$-DDF (but the converse is not generally true). For general background on $\Z$-cyclic whist tournaments we refer to \cite{AF}.

For a DDF as in (iii) - which a special case of the DDFs in (iv) - one can simply take the set of all cosets of the $k$-th roots of unity in the multiplicative group of $\mathbb{F}_v$. 
This DDF is usually attributed to Wilson \cite{W} but we note that it was given earlier in an equivalent form by Ferrero \cite{F}.
We also note that this DDF can be presented as the $(v,k-1)$-ZDBF mapping any $x\in \mathbb{F}_{v}$ into $x^{(v-1)/k}\in  \mathbb{F}_{v}$.

A large class of DDFs as in (iii) and the additional condition that $k\equiv2$ (mod 4) have been very recently given by Li \cite{L}; 
each block of these DDFs is a suitable union of two cosets of the ${k\over2}$-th roots of unity in $\mathbb{F}_v$. 

Results (iv) and (v) have been recently described using the language of ZDBFs and obtained with quite involved proofs in 
\cite{CZHTY} and \cite{DWX}, respectively.
We note, however, that a simple proof of (iv) was given by Furino in 1991 (see end of section 3 in \cite{Fu}) and that
the same proof was implicitly given by Boykett in 2001 (see the proof of Proposition 7 in \cite{B}). 
There is another approach for getting (iv) very quickly from (ii) with the use of {\it difference matrices}; this has been 
very recently noted by Li, Wei and Ge \cite{LWG} but traces of the same approach 
can be found in a very old paper by Jungnickel  (see Corollary 4.5 in \cite{J}).

We note that difference matrices would also allow to obtain (v) very quickly from the existence of a cyclic $(p^n,k,k-1)$-DDF for every  
prime $p\equiv1$ (mod $k$). Such a difference family was obtained by Furino (see Lemma 4.3 in \cite{F}) and it is also deducible
from an even earlier result by Phelps \cite{P} (see Theorem 4.6 in the paper \cite{B1} by the present author).

In the next section we will give a clean construction for  $(v,k,k-1)$-DDFs in the kernel of a Frobenius group
which can be deduced from an old nearring result.
We will show how (iv) and (v) can be almost immediately obtained as special cases of this construction.
In the last section we will prove that (v) can be generalized to any group $G$ of order $v$.

\section{Ferrero $(v,k,k-1)$ difference families}
A Frobenius group $F$ is a semidirect product $G\rtimes A$ with $A$ a non-trivial group of automorphisms of $G$
acting {\it semiregularly} on $G\setminus\{0\}$ (one also says that $A$ is {\it fixed point free}). 
This means that for $g\in G$ and $\alpha\in A$ we have $\alpha(g)=g$ if and only if either $\alpha=id_G$ or $g=0$.
The groups $G$ and $A$ are said to be the {\it kernel} and the {\it complement} of $F$, respectively. 
Any such pair $(G,A)$ is called a {\it Ferrero pair} by {\it nearring} theorists \cite{B, C}.
Note that if $(G,A)$ is a Ferrero pair, then $(H,B)$ is a Ferrero pair as well for every non-trivial subgroup $H$ of $G$ and any
non-trivial subgroup $B$ of $A$.
For general background on Frobenius groups we refer to \cite{I}.

The following theorem is the highlight of this section but we point out that the first part of the theorem is not really new since it is essentially the same as Theorem 5.5 in \cite{C}.
The crucial diversity is that our presentation and proof are given in terms of differences. In the original statement it is said that
any Ferrero pair $(G,A)$ with $ord(G)=v$ and $ord(A)=k$ generates a 2-$(v,k,k-1)$ design and then in the proof it is shown that the blocks 
of this design are all the translates of the $A$-orbits of the non-zero elements of $G$ under the natural action of $G$. 
For the main subject of the present paper, the crucial fact is that the set of $A$-orbits on $G\setminus\{0\}$ is a $(v,k,k-1)$-DDF.

\begin{theorem}\label{Frob}
If $(G,A)$ is a Ferrero pair with $ord(G)=v$ and $ord(A)=k$, then the set of $A$-orbits on $G\setminus\{0\}$
is a $(v,k,k-1)$-DDF in $G$. In the hypothesis that $G$ is abelian and that $vk$ is odd, this DDF is splittable into two $(v,k,{k-1\over2})$-DDFs.
\end{theorem}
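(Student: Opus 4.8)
The plan is to establish the two assertions separately, proving the first for arbitrary $G$ (abelian or not) and invoking the extra hypotheses only for the splitting. For the difference family property I would first record that semiregularity forces every nonzero element to have trivial stabilizer, so each $A$-orbit on $G\setminus\{0\}$ has size exactly $k$; the orbits therefore partition $G\setminus\{0\}$ into $(v-1)/k$ pairwise disjoint blocks of size $k$, which already gives disjointness. The key reformulation of the difference list is that two distinct elements $b,b'$ share an orbit exactly when $b'=\alpha(b)$ for a \emph{unique} $\alpha\in A\setminus\{id_G\}$, uniqueness again being semiregularity. Hence $\Delta\mathcal F$ equals, as a multiset, $\{\,b-\alpha(b): b\in G\setminus\{0\},\ \alpha\in A\setminus\{id_G\}\,\}$, of total size $(v-1)(k-1)$, which already matches the count expected for index $k-1$.

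To upgrade this count to exact uniformity I would fix $\alpha\neq id_G$ and study the map $\phi_\alpha\colon b\mapsto b-\alpha(b)$ on $G$. The one genuine computation is that $\phi_\alpha$ is a bijection: if $\phi_\alpha(b_1)=\phi_\alpha(b_2)$ then---most transparently in multiplicative notation, where $\phi_\alpha(b)=b\,\alpha(b)^{-1}$---the element $b_2^{-1}b_1$ is fixed by $\alpha$, hence trivial by fixed-point-freeness, so $b_1=b_2$; injectivity on the finite group yields surjectivity. Since $\phi_\alpha(0)=0$, each nonzero $d$ has a unique, automatically nonzero, preimage under each $\phi_\alpha$, and letting $\alpha$ range over the $k-1$ non-identity automorphisms shows $d$ is covered exactly $k-1$ times. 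I stress that this argument never uses commutativity, which is precisely what makes the statement valid for non-abelian $G$.

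For the splitting, assume now $G$ abelian and $v,k$ both odd. Since negation $g\mapsto -g$ commutes with every automorphism of $G$, it sends $A$-orbits to $A$-orbits, hence acts as an involution on the set of blocks. The step I expect to be the crux is showing this involution is \emph{fixed-point-free}, i.e.\ $-O\neq O$ for every orbit $O$; this is the only place the parity hypotheses are used. Arguing by contradiction, if $-b=\alpha(b)$ for some $b\in O$ and $\alpha\in A$, then $\alpha^2$ fixes $b$ and so $\alpha^2=id_G$; as $|A|=k$ is odd this forces $\alpha=id_G$, whence $2b=0$ and, $v$ being odd, $b=0$, a contradiction. Consequently the $(v-1)/k$ orbits split into $(v-1)/(2k)$ genuine pairs $\{O,-O\}$.

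Finally I would assign one orbit of each pair to $\mathcal F_1$ and its negative to $\mathcal F_2$. Because the difference list of any block is symmetric under negation, $\Delta(-O)=-\Delta O=\Delta O$, so $\Delta\mathcal F_1=\Delta\mathcal F_2$ as multisets; since their multiset sum is $\Delta\mathcal F$, which covers each nonzero element $k-1$ times by the first part, each of $\mathcal F_1$ and $\mathcal F_2$ covers every nonzero element exactly $(k-1)/2$ times. As each family consists of $(v-1)/(2k)$ pairwise disjoint $k$-subsets, each is a $(v,k,(k-1)/2)$-DDF, which completes the splitting.
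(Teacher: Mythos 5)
Your proof is correct and follows essentially the same route as the paper: the first part rests on the bijectivity of $g\mapsto g-\alpha(g)$ (equivalently $\alpha(g)-g$) forced by fixed-point-freeness, and the splitting rests on negation acting as a fixed-point-free involution on the orbits with $\Delta(-O)=\Delta O$. Your argument that no orbit is fixed by negation (via $\alpha^2(b)=b$, semiregularity, and the oddness of $k$) is a slightly more direct variant of the paper's induction up to $\alpha^k$, but the overall strategy is identical.
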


% \begin{theorem}\label{Frob}
% Any Frobenius group with kernel $G$ of order $v=kw+1$ and complement $A$ of order $k$ determines
% a $(v,[^wk,\,^11],k-1)$-PDF in $G$ whose blocks of size $k$ are the $A$-orbits on $G\setminus\{0\}$.
% \end{theorem}

%\begin{theorem}\label{Aut}
%Let $G$ be a group of order $v=kw+1$ and let $A$ be a subgroup of order $k$ of
%$Aut(G)$ acting semiregularly on $G\setminus\{0\}$. Then the $A$-orbits on $G\setminus\{0\}$ form a disjoint $(v,k,k-1)$-DF or, equivalently,
%the $A$-orbits on $G$ form a $(v,[^wk,\,^11],k-1)$-PDF.
%\end{theorem}
\begin{proof}By definition, $A$ acts semiregularly on $G\setminus\{0\}$. This easily implies that
each $A$-orbit on $G\setminus\{0\}$ has size $k$ and that the map $g\in G \longrightarrow \alpha(g)-g\in G$
is a bijection for every $\alpha\in A\setminus\{id_G\}$. Thus we can write:
\begin{equation}\label{ortom}
\{\alpha(g)-g \ | \ g\in G\setminus\{0\}\}=G\setminus\{0\} \quad \forall \alpha\in A\setminus\{id_G\}
\end{equation} 

Let $X$ be a complete set of representatives for the $A$-orbits on $G\setminus\{0\}$ and for each $x\in X$ let
$Orb(x)$ be the $A$-orbit of $x$.
We have to prove that ${\cal O}:=\{Orb(x) \ | \ x\in X\}$ is a $(v,k,k-1)$-DDF. It is evident that the ordered pairs of distinct elements of $Orb(x)$ 
with a fixed second coordinate $y$ are exactly those of the form $(\alpha(y),y)$ with $\alpha\in A\setminus\{id_G\}$. Thus we have
$\Delta Orb(x)=\biguplus_{y\in{\cal O}(x)}\{\alpha(y)-y \ | \ \alpha\in A\setminus\{id_G\}\}$, hence 
$$\Delta{\cal O}=\biguplus_{x\in X}\biguplus_{y\in{\cal O}(x)}\{\alpha(y)-y \ | \ \alpha\in A\setminus\{id_G\}\}=
\biguplus_{g\in G\setminus\{0\}}\{\alpha(g)-g \ | \ \alpha\in A\setminus\{id_G\}\}.$$
So we  can write $\displaystyle\Delta{\cal O}=\biguplus_{\alpha\in A\setminus\{id_G\}}\{\alpha(g)-g \ | \ g\in G\setminus\{0\}\}$ which,
by (\ref{ortom}), is the union of $k-1$ copies of $G\setminus\{0\}$.
The first part of the assertion follows.

From now we assume that $kv$ is odd and that $G$ is abelian.
Suppose that two opposite elements $g$ and $-g$ are in the same $A$-orbit.
In this case there is an $\alpha\in A$ such that $\alpha(g)=-g$.
Then, by induction, we would have $\alpha^i(g)=g$ or $-g$ according to whether $i$ is even or odd, respectively.
Thus, in particular, we would have $\alpha^k(g)=-g$. On the other hand $k$ is the order of $A$ so that $\alpha^k=id_G$.
It follows that $g=-g$. So, considering that $G$ does not have involutions since $v$ is odd, we necessarily 
have $g=0$. We conclude that the set $X$ considered in the first part of our proof can be chosen of the form
$X=Y \cup \ -Y$ with $Y$ a suitable ${k-1\over2}$-subset of $G$. In this way we have that $\cal O$ is splittable into
the two parts ${\cal O}_1=\{Orb(y) \ | \ y\in Y\}$ and ${\cal O}_2=\{Orb(-y) \ | \ y\in Y\}$. Now note that $Orb(-y)=-Orb(y)$.
Hence we obviously have $\Delta Orb(-y)=\Delta Orb(y)$ for each $y\in Y$ since $G$ is abelian. We conclude that
$\Delta{\cal O}_1=\Delta{\cal O}_2$ and then, recalling that $\cal O$ is a $(v,k,k-1)$-DDF, we deduce that both
${\cal O}_1$ and ${\cal O}_2$ are $(v,k,{k-1\over2})$-DDFs.
\end{proof}

The $(v,k,k-1)$-DDFs produced by the above theorem will be said {\it Ferrero difference families}. 

Note that the {\it patterned starter} of a group $G$ of odd order (namely the set of all possible pairs
$\{g,-g\}$ of opposite elements of $G\setminus\{0\}$) can be seen as the Ferrero DF 
determined by the Ferrero pair $(G,\{id,-id\})$ when $G$ is abelian.

As a first immediate consequence of Theorem \ref{Frob} we have the following result which
was also stated in a weaker form by Furino (\cite{Fu}, Lemma 4.2).

\begin{lemma}\label{ring}
Let $R$ be a ring of order $v$ with unity, and let $U(R)$ be the group of units of $R$. If $U$ is a subgroup of order $k$ of $U(R)$
with $u-1\in U(R)$ for each $u\in U\setminus\{1\}$, then there exists a Ferrero $(v,k,k-1)$-DDF in the additive group of $R$.
\end{lemma}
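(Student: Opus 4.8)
The plan is to deduce the statement directly from Theorem \ref{Frob} by manufacturing, out of the subgroup $U$, a group $A$ of automorphisms of the additive group $(R,+)$ such that $((R,+),A)$ is a Ferrero pair. The natural choice is to let each unit act by left multiplication: for every $u\in U$ set $\phi_u\colon R\to R$, $\phi_u(g)=ug$. The first task is to check that $A:=\{\phi_u \mid u\in U\}$ really is a group of additive automorphisms of order $k$. Each $\phi_u$ is additive by the left distributive law, and it is bijective with inverse $\phi_{u^{-1}}$ because $u\in U(R)$; moreover $u\mapsto\phi_u$ is a homomorphism from $U$ into $\mathrm{Aut}(R,+)$ since $\phi_u\circ\phi_w=\phi_{uw}$, and it is injective because $\phi_u=\mathrm{id}$ forces $u=u\cdot 1=1$. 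Hence $|A|=|U|=k$. (If $k=1$ the statement is degenerate, so I assume $k\geq2$ so that $A$ is non-trivial, as required for a Ferrero pair.)

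The heart of the argument, and the only place where the hypothesis $u-1\in U(R)$ is used, is the verification that $A$ acts semiregularly on $R\setminus\{0\}$. Suppose $\phi_u(g)=g$ for some $g\in R$ and some $u\in U\setminus\{1\}$; then $ug=g$, i.e.\ $(u-1)g=0$. By assumption $u-1$ is a unit, so multiplying on the left by $(u-1)^{-1}$ yields $g=0$. Thus $\phi_u$ fixes only $0$ for every $u\neq 1$, which is precisely fixed-point-freeness. This confirms that $((R,+),A)$ is a Ferrero pair with $ord(R,+)=v$ and $ord(A)=k$.

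With the Ferrero pair in hand, Theorem \ref{Frob} immediately gives that the set of $A$-orbits on $R\setminus\{0\}$ is a $(v,k,k-1)$-DDF in $(R,+)$; concretely, the orbit of $g$ is the left coset $Ug=\{ug\mid u\in U\}$, so the family is $\{Ug\}$ as $g$ ranges over a transversal of these cosets in $R\setminus\{0\}$. Being obtained from a Ferrero pair, it is by definition a Ferrero difference family, which is exactly the claim. I expect the only genuinely delicate points to be bookkeeping ones: since $R$ need not be commutative one must fix a side (here: left multiplication, giving the cancellation $(u-1)g=0\Rightarrow g=0$ via the \emph{left} inverse of $u-1$), and one must be sure that the closure $u-1\in U(R)$ is assumed for all $u\in U\setminus\{1\}$ so that semiregularity holds for \emph{every} non-identity element of $A$. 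No heavy computation is needed once the action is identified; the substance is entirely in checking the Ferrero-pair axioms.
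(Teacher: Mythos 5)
Your proposal is correct and follows exactly the paper's own argument: identify each $u\in U$ with the additive automorphism $x\mapsto ux$, use the hypothesis $u-1\in U(R)$ to get semiregularity, and invoke Theorem \ref{Frob}. You simply spell out the routine verifications that the paper leaves implicit.
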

\begin{proof} Any subgroup $U$ of $U(R)$ can be seen as an automorphism group of the additive group $G$ of $R$.  
Indeed any $u\in U(R)$ can be identified with the automorphism of  $G$ mapping $x$ into $ux$.
It is also clear that $u-1\in U(R)$ for each $u\in U\setminus\{1\}$ implies that $U$ acts semiregularly on $G\setminus\{0\}$.
The assertion then follows from Theorem \ref{Frob}.\end{proof}

Now we show how  the previous lemma allows to obtain result (iv) very quickly. We essentially give the same old easy proof given by Furino \cite{Fu},
not comparable to the recent tortuous proof in \cite{DWX}. 
\begin{corollary}\label{EA}
Let $v$ be a product of prime powers $q_1,\dots, q_n$ all congruent to $1$ $($mod $k)$. 
Then there exists a Ferrero $(v,k,k-1)$-DDF in the additive group of $\mathbb{F}_{q_1}\times\dots\times \mathbb{F}_{q_n}$.
\end{corollary}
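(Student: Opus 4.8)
The plan is to apply Lemma~\ref{ring} to the ring $R=\mathbb{F}_{q_1}\times\dots\times\mathbb{F}_{q_n}$, the direct product of the fields. This is a commutative ring with unity whose additive group is exactly $\mathbb{F}_{q_1}\times\dots\times\mathbb{F}_{q_n}$ and whose order is $v=q_1\cdots q_n$. So I need to exhibit a subgroup $U$ of the unit group $U(R)$ of order $k$ with the property that $u-1\in U(R)$ for every $u\in U\setminus\{1\}$; the existence of the desired Ferrero difference family will then follow immediately from the lemma.

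First I would identify the unit group. Since $R$ is a product of fields, $U(R)=\mathbb{F}_{q_1}^*\times\dots\times\mathbb{F}_{q_n}^*$, and an element $(u_1,\dots,u_n)$ is a unit precisely when every coordinate $u_i$ is nonzero in $\mathbb{F}_{q_i}$. Because each $q_i\equiv1\pmod k$, the multiplicative group $\mathbb{F}_{q_i}^*$, being cyclic of order $q_i-1$ divisible by $k$, contains a unique cyclic subgroup $C_i$ of order $k$, namely the group $\mu_i$ of $k$-th roots of unity in $\mathbb{F}_{q_i}$. The natural candidate is the ``diagonal'' copy of the $k$-th roots of unity: fix a generator $\zeta_i$ of $\mu_i$ in each factor and set $\zeta=(\zeta_1,\dots,\zeta_n)$, then take $U=\langle\zeta\rangle=\{(\zeta_1^j,\dots,\zeta_n^j)\mid 0\le j<k\}$. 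This is a cyclic subgroup of $U(R)$ of order exactly $k$, since $\zeta$ has order $k$ (each $\zeta_i$ does).

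The heart of the matter, and the step I expect to be the only real obstacle, is the semiregularity condition $u-1\in U(R)$ for $u\in U\setminus\{1\}$. For $u=\zeta^j$ with $1\le j\le k-1$ this amounts to requiring that every coordinate $\zeta_i^j-1$ be a unit, i.e.\ nonzero, in $\mathbb{F}_{q_i}$. But $\zeta_i$ has multiplicative order $k$, so $\zeta_i^j=1$ would force $k\mid j$, which is impossible for $1\le j\le k-1$; hence $\zeta_i^j\ne1$ and $\zeta_i^j-1\ne0$ for every $i$ and every such $j$. Thus $u-1$ has all coordinates nonzero and is therefore a unit of $R$, exactly as required. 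This is precisely the place where the hypothesis $q_i\equiv1\pmod k$ (forcing the existence of a full set of $k$-th roots of unity in each field) is used.

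With the subgroup $U$ and the verified unit condition in hand, Lemma~\ref{ring} applies directly and yields a Ferrero $(v,k,k-1)$-DDF in the additive group of $R=\mathbb{F}_{q_1}\times\dots\times\mathbb{F}_{q_n}$, which is the assertion. I would remark that this construction is really nothing more than the product of the single-field $k$-th-root-of-unity difference families of case (iii), packaged as one Ferrero pair $(G,U)$ so that Theorem~\ref{Frob} does all the counting work at once; this is what makes the proof short compared with the ZDBF-based argument in \cite{DWX}.
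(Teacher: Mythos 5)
Your proposal is correct and follows exactly the paper's own argument: pick a primitive $k$-th root of unity $u_i$ in each $\mathbb{F}_{q_i}$, form the diagonal element $u=(u_1,\dots,u_n)$ generating a cyclic subgroup $U$ of order $k$ in $U(R)$, observe that $u^j-1$ has all coordinates nonzero (hence is a unit) for $1\le j\le k-1$, and invoke Lemma~\ref{ring}. The only difference is that you spell out the verification that the paper calls ``immediate.''
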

\begin{proof}For $1\leq i\le n$, take a $k$-th primitive root $u_i$ of $\mathbb{F}_{q_i}$.
It is immediate that  $u:=(u_1,\dots,u_n)$ is a unit of order $k$ of the ring 
$R=\mathbb{F}_{q_1}\times\dots\times\mathbb{F}_{q_n}$ and that $u^i-1$ is a unit of $R$ for $1\leq i\leq k-1$.
The assertion then follows from Lemma \ref{ring}.\end{proof}

Let us say that a Ferrero pair $(G,A)$ has parameters $(v,k)$ if $v$ and $k$ are the orders of $G$ and $A$,
respectively. A trivial necessary condition for $(v,k)$ to be the parameters of a suitable Ferrero pair is that
$k$ divides $v-1$. From the above corollary one deduces that a sufficient condition is that $q\equiv1$ (mod $k$) 
for every maximal prime power factor $q$ of $v$. This condition has been proved to be also necessary 
by Boykett (\cite{B}, Corollary 6) as a consequence of other results using, in particular, Thompson's 
theorem on Frobenius groups. We reprove  this below in a simpler and more direct way. 
\begin{proposition}
There exists a suitable Ferrero pair of parameters $(v,k)$ - or equivalently a Ferrero $(v,k,k-1)$-DDF in a suitable group -
if and only if 
$q\equiv1$ $($mod $k)$ for every maximal prime power factor $q$ of $v$.
\end{proposition}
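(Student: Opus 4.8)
I would treat the two implications separately, the ``if'' part being already at hand and the ``only if'' part carrying all the weight.

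For sufficiency, write $v=q_1\cdots q_n$ as the product of its maximal prime power factors. The hypothesis $q_i\equiv 1\pmod k$ for every $i$ is exactly the hypothesis of Corollary \ref{EA}, which yields a Ferrero $(v,k,k-1)$-DDF in the additive group of $\mathbb{F}_{q_1}\times\cdots\times\mathbb{F}_{q_n}$; by Theorem \ref{Frob} this family is nothing but the set of $A$-orbits of a Ferrero pair of parameters $(v,k)$, so such a pair exists. Hence there is nothing more to prove in this direction, and the real point of the proposition is the converse.

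For necessity, I would start from a Ferrero pair $(G,A)$ with $ord(G)=v$ and $ord(A)=k$ and first record that the action is \emph{coprime}: semiregularity forces every stabilizer in $A$ of a point of $G\setminus\{0\}$ to be trivial, so each $A$-orbit on $G\setminus\{0\}$ has size $k$ and $k\mid v-1$; a prime dividing both $k$ and $v$ would then divide $v-1$ too, which is impossible, so $\gcd(k,v)=1$. Next I would fix a prime $p\mid v$, let $q=p^a$ be the exact power of $p$ dividing $v$, and try to reduce the whole problem to the single Sylow subgroup of that order. The decisive step is to produce an \emph{$A$-invariant} Sylow $p$-subgroup $P$ of $G$; this is the standard invariant-Sylow theorem for coprime actions (see, e.g., \cite{I}), which one reaches by working in the Frobenius group $\Gamma=G\rtimes A$, applying the Frattini argument to a Sylow $p$-subgroup of $\Gamma$ lying inside $G$, and invoking the Schur--Zassenhaus theorem.

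Once such a $P$ is available, the argument closes by reusing verbatim the orbit computation of Theorem \ref{Frob}. Because $P\subseteq G$, a fixed point of some $\alpha\in A\setminus\{id_G\}$ inside $P\setminus\{0\}$ would be a fixed point inside $G\setminus\{0\}$; hence $A$ still acts semiregularly on $P$, every $A$-orbit on $P\setminus\{0\}$ has size $k$, and so $k\mid |P|-1=q-1$, that is $q\equiv 1\pmod k$. Since $p$ was an arbitrary prime divisor of $v$, this gives the claim. The entire difficulty is concentrated in the invariant-Sylow step: the surrounding reasoning is elementary orbit counting, and the step itself is genuinely elementary (pure counting together with induction on $|A|$) when $A$ has prime power order, the general case being exactly the standard coprime-action result.
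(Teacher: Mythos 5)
Your proof is correct, and the overall skeleton coincides with the paper's: both directions reduce the ``only if'' part to producing an $A$-invariant Sylow $p$-subgroup $S$ of $G$, after which $(S,A)$ is again a Ferrero pair and semiregularity forces $k\mid |S|-1=q-1$ (the ``if'' part is, in both cases, just a pointer to Corollary \ref{EA}). Where you genuinely diverge is in how the invariant Sylow subgroup is obtained. The paper stays entirely elementary: it lets $A$ permute $Syl_p(G)$ and argues that if no member were fixed then $k$ would divide $|Syl_p(G)|$, hence $v/p^e$, which together with $k\mid v-1$ forces $k=1$, a contradiction. (As written this step silently assumes that every non-singleton $A$-orbit on $Syl_p(G)$ has full size $k$ rather than a proper divisor of $k$ greater than $1$; your route sidesteps that issue entirely.) You instead invoke the standard invariant-Sylow theorem for coprime actions, proved via the Frattini argument and Schur--Zassenhaus inside $\Gamma=G\rtimes A$ --- after first checking $\gcd(k,v)=1$, which the paper never states explicitly but which is implicit in $k\mid v-1$. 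What each approach buys: the paper's counting argument keeps the proof self-contained and short, which is precisely its advertised advantage over Boykett's original proof in \cite{B}; your argument leans on heavier (and, for the conjugacy half of Schur--Zassenhaus, ultimately Feit--Thompson-dependent) machinery, but it is the textbook-robust version of the same idea and makes the coprimality hypothesis that drives everything fully explicit.
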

\begin{proof} 
$(\Longrightarrow)$ Let $(G,A)$ be a Ferrero pair of parameters $(v,k)$ and let $p^e$, $p$ prime, be a 
maximal prime power factor of $v$.
The group $A$ acts as a permutation group on the set $Syl_p(G)$ of all Sylow $p$-subgroups of $G$.
If $A$ does not fix any member of $Syl_p(G)$, then each $A$-orbit on 
$Syl_p(G)$ would have size $|A|=k$ so that $k$ divides $|Syl_p(G)|$. In its turn $|Syl_p(G)|$ divides ${v\over p^e}$
by the third Sylow theorem. We conclude that $k$ divides both $v-1$ and ${v\over p^e}$, hence $k=1$
which is absurd. So $A$ fixes a suitable $S\in Syl_p(G)$. It follows that $(S,A)$ is a Ferrero pair so that 
$ord(A)=k$ divides $ord(S)-1=p^e-1$ which is the assertion. 

$(\Longleftarrow)$ See Corollary \ref{EA}.
\end{proof}

Now we show how result (v) can be generalized to any abelian group and that
this can be also quickly obtainable as a corollary of Lemma \ref{ring}.

\begin{corollary}\label{cyclic}
If all prime factors of $v$ are congruent to $1$ $($mod $k)$,
then there exists a Ferrero $(v,k,k-1)$-DDF in any abelian group $G$ of order $v$.
\end{corollary}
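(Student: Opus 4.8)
The plan is to imitate the proof of Corollary \ref{EA}, replacing the finite fields by residue rings modulo prime powers so that the cyclic factors of $G$ are recovered as additive groups. First I would invoke the fundamental theorem of finite abelian groups to write
$$G \cong \Z_{p_1^{e_1}}\times\cdots\times\Z_{p_m^{e_m}},$$
where $p_1^{e_1}\cdots p_m^{e_m}=v$ and the primes $p_i$ (not necessarily distinct) are exactly the prime factors of $v$. We may assume $k\geq 2$, so the hypothesis $p_i\equiv 1 \pmod k$ forces $p_i\geq k+1\geq 3$; in particular every $p_i$ is odd. I would then view $G$ as the additive group of the product ring $R:=\Z/p_1^{e_1}\Z\times\cdots\times\Z/p_m^{e_m}\Z$, a commutative ring with unity whose unit group is $U(R)=\prod_i U(\Z/p_i^{e_i}\Z)$.

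Second, for each $i$ I would produce a unit $u_i$ of order exactly $k$ in $U(\Z/p_i^{e_i}\Z)$. Since $p_i$ is odd this group is cyclic of order $\varphi(p_i^{e_i})=p_i^{e_i-1}(p_i-1)$, and $k\mid p_i-1$ divides this order; hence such a $u_i$ exists (e.g.\ as $g_i^{\varphi(p_i^{e_i})/k}$ for a generator $g_i$). Setting $u:=(u_1,\dots,u_m)$, its order is $\mathrm{lcm}\bigl(\mathrm{ord}(u_1),\dots,\mathrm{ord}(u_m)\bigr)=k$, so $U:=\langle u\rangle$ is a subgroup of $U(R)$ of order $k$, whose nonidentity elements are precisely $u,u^2,\dots,u^{k-1}$.

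Third --- and this is where the new content beyond Corollary \ref{EA} lies --- I must verify the semiregularity hypothesis of Lemma \ref{ring}, namely that $u^{j}-1\in U(R)$ for every $j$ with $1\leq j\leq k-1$. Componentwise this reduces to showing that $u_i^{j}-1$ is a unit of $\Z/p_i^{e_i}\Z$, i.e.\ that $u_i^{j}\not\equiv 1 \pmod{p_i}$. Here I would use the reduction homomorphism $U(\Z/p_i^{e_i}\Z)\to U(\Z/p_i\Z)$, whose kernel has order $p_i^{e_i-1}$, a power of $p_i$. Because $k=|\langle u_i\rangle|$ is coprime to $p_i$ (again since $p_i\equiv 1 \pmod k$ gives $p_i\nmid k$), the subgroup $\langle u_i\rangle$ intersects this kernel trivially, so reduction is injective on it; as $u_i^{j}\neq 1$ for $1\leq j\leq k-1$, we obtain $u_i^{j}\not\equiv 1 \pmod{p_i}$, as required. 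With the hypotheses of Lemma \ref{ring} now checked, that lemma yields a Ferrero $(v,k,k-1)$-DDF in the additive group of $R$, which is $G$.

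The main obstacle is exactly this third step. In the field setting of Corollary \ref{EA} a nonzero element $u^{j}-1$ is automatically invertible, whereas in $\Z/p_i^{e_i}\Z$ a nonzero element can fail to be a unit, so one genuinely needs the coprimality argument to exclude $u_i^{j}\equiv 1 \pmod{p_i}$. This is also precisely the place where the stronger hypothesis that \emph{all} prime factors of $v$ are congruent to $1$ modulo $k$ --- rather than merely the maximal prime power divisors --- is exploited, as it simultaneously guarantees the existence of each $u_i$ and the coprimality of $k$ with every $p_i$.
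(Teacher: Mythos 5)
Your proposal is correct and follows essentially the same route as the paper: decompose $G$ via the Fundamental Theorem of Finite Abelian Groups into the additive group of $R=\Z_{q_1}\times\cdots\times\Z_{q_n}$, pick an order-$k$ unit $u_i$ in each cyclic factor $U(\Z_{q_i})$, and show $u_i^j\not\equiv 1\pmod{p_i}$ by intersecting $\langle u_i\rangle$ trivially with the Sylow $p_i$-subgroup of $U(\Z_{q_i})$ (your kernel of reduction mod $p_i$ is exactly the paper's subgroup $S_{q_i}$), then apply Lemma \ref{ring}. The only cosmetic difference is that you phrase the coprimality step via the reduction homomorphism rather than via the named subgroup $S_{q_i}$; the content is identical.
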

\begin{proof}First recall that if $q=p^e$ with $p$ prime, then 
$U(\mathbb{Z}_{q})$ is cyclic of order $\phi(q)=p^{e-1}(p-1)$ (see, e.g., \cite{A}). 
Also, the subgroup $S_q$ of $U(\mathbb{Z}_{q})$ of order $p^{e-1}$ consists of all elements $s\in \mathbb{Z}_q$ with $s\equiv1$ (mod $p$).

Let $G$ be an abelian group of order $v$. By the Fundamental Theorem of Finite Abelian Groups, there are suitable prime powers $q_1,\dots, q_n$ dividing $v$ such that,
up to isomorphism, $G$ is the additive group of the ring $R=\mathbb{Z}_{q_1}\times\dots\times\mathbb{Z}_{q_n}$. Of course we have
$U(R)=U(\mathbb{Z}_{q_1})\times\dots \times U(\mathbb{Z}_{q_n})$. 

For $1\leq i\leq n$, set $q_i=p_i^{e_i}$ with $p_i$ prime. By assumption, $k$ divides $p_i-1$, hence $U(\mathbb{Z}_{q_i})$ has an element $u_i$ of order $k$. 
Obviously $\langle u_i\rangle$ has trivial intersection with $S_{q_i}$, the subgroup of $U(\mathbb{Z}_{q_i})$ of order $p_i^{e_i-1}$.
Hence, for $1\leq i\leq n$ and $1\leq j\leq k-1$, we have $u_i^j\notin S_{q_i}$, i.e., $u_i^j\not\equiv1$ (mod $p$).

We conclude that  $u:=(u_1,\dots,u_n)$ is a unit of  $R$ of order $k$ and that $u^j-1$ is also a unit of $R$ for $1\leq j\leq k-1$. 
The assertion then follows from Lemma \ref{ring}.\end{proof}

There are infinite classes of Ferrero DDFs which neither Corollary  \ref{EA} nor Corollary \ref{cyclic}
are able to capture. One of these classes will be given in the next section.

Here we only give an easy example of a Ferrero $(q^4,3,2)$-DDF in $\mathbb{Z}_{q^2}\times \mathbb{Z}_{q^2}$
for any prime power $q$ not divisible by 3. Such a DDF cannot be obtained from Corollary \ref{cyclic} when $q$ is a
power of 2 or the power of a prime $p\equiv5$ (mod 6).
Consider the automorphism $\alpha$  of $\mathbb{Z}_{q^2}\times \mathbb{Z}_{q^2}$
defined by $\alpha(x,y)=(y-x,-x)$. One can see that the group $A$ generated by $\alpha$ has order 3 and acts
semiregularly on $G\setminus\{(0,0)\}$. So we obtain the required $(q^4,3,2)$-DDF by applying 
Theorem \ref{Frob}.

For instance, for $q=2$, we get the following Ferrero $(16,3,2)$-DDF in $\mathbb{Z}_4\times \mathbb{Z}_4$
(in order to save space, any
pair $(x,y)$ is denoted by $xy$):
$$\bigl{\{}\{01,10,33\}, \ \{02,20,22\}, \ \{03,30,11\}, \  \{12,13,23\}, \ \{21,32,31\}\bigl{\}}.$$

\section{Ferrero difference families from the Fibonacci sequence}

Here we present an infinite class of Ferrero PDFs obtainable via the Fibonacci sequence which, in many cases, 
cannot be deduced from Corollary  \ref{EA} or Corollary \ref{cyclic}. For this, we need to recall some number theoretic arguments.

The {\it Pisano period modulo $n$}, denoted $\pi(n)$, is the period of the Fibonacci sequence modulo $n$ which is also equal to
the period of the {\it Fibonacci matrix} $\bf F=\begin{pmatrix}1&1\cr1&0\end{pmatrix}$ in the group $GL_2(n)$ of all $2\times2$ invertible 
matrices of the ring $\mathbb{Z}_{n}$. There is no known formula for $\pi(p)$ with $p$ a prime but the following properties have been established so far
(see, e.g., \cite{R}):
\begin{itemize}
\item[$(P_1)$] $\pi(p^2)$ is equal either to $p\pi(p)$ or $\pi(p)$;
\item[$(P_2)$] $\pi(p)$ is equal to the least common multiple of the periods of the two eigenvalues of $\bf F$ in the multiplicative group of $\mathbb{F}_{p^2}$;

\item[$(P_3)$] $\pi(p)=\begin{cases}3 \hfill\mbox{ if $p=2$}
\smallskip\cr20 \hfill\mbox{ if $p=5$}
\smallskip\cr
\mbox{an even divisor of $p-1$}\hfill \mbox{if $p\equiv\pm1$ (mod 10)}
\smallskip\cr
\mbox{${2(p+1)\over d}$ with $d$ an odd divisor of $p+1$}\quad\hfill \mbox{if $p\equiv\pm3$ (mod 10)} \end{cases}$
\end{itemize}

Regarding property $(P_1)$, it should be noted that there is no known prime $p$ for which $\pi(p^2)=\pi(p)$ holds.

\begin{proposition}\label{pisano}
There exists a Ferrero $(p^4,k,k-1)$-DDF in $\mathbb{Z}_{p^2}\times\mathbb{Z}_{p^2}$ for any prime 
$p\neq5$ and any divisor $k$ of the Pisano period $\pi(p)$.
\end{proposition}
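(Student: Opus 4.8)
The plan is to realize $\mathbb{Z}_{p^2}\times\mathbb{Z}_{p^2}$ as the additive group of the ring $R=\mathbb{Z}_{p^2}[t]/(t^2-t-1)$ of order $(p^2)^2=p^4$, and to let the \emph{golden ratio} $x$ (the class of $t$) play the role of the Fibonacci matrix $\mathbf{F}$. A one-line induction, using $F_0=0$, gives
$$x^n=F_nx+F_{n-1}\qquad(n\ge1),$$
whence $x$ is a unit with $x^{-1}=x-1$, and $x^n=1$ precisely when $F_n\equiv0$ and $F_{n-1}\equiv1\pmod{p^2}$. By the definition of the Pisano period this means that the order of $x$ in $U(R)$ is exactly $\pi(p^2)$. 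Since $k\mid\pi(p)\mid\pi(p^2)$ (by $(P_1)$), the group $U:=\langle x^{\pi(p^2)/k}\rangle$ is cyclic of order $k$, and I would apply Lemma \ref{ring} to $U$; so the task reduces to showing that $u-1\in U(R)$ for every $u\in U\setminus\{1\}$.

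The simplification that makes this tractable is that $r\in R$ is a unit if and only if its reduction $\bar r$ is a unit in $R/pR$: indeed $p^2=0$ in $\mathbb{Z}_{p^2}$, so $(pR)^2=0$, the ideal $pR$ is nil, and units lift modulo a nil ideal. Hence everything descends to the $\mathbb{F}_p$-algebra $R/pR=\mathbb{F}_p[t]/(t^2-t-1)$, in which $\bar x$ has order $\pi(p)$. Whether this algebra is a field is decided by the discriminant $5$ of $t^2-t-1$, so I would split the argument according to whether $5$ is a non-square or a square modulo $p$ — equivalently, by reciprocity, whether $p\equiv\pm3$ or $p\equiv\pm1\pmod{10}$, the prime $p=2$ falling in the first alternative.

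In the non-split case $R/pR=\mathbb{F}_{p^2}$ is a field. The reduction map sends $U$ isomorphically onto the unique order-$k$ subgroup $\langle\bar x^{\pi(p)/k}\rangle$ of $\langle\bar x\rangle$, so for $u\in U\setminus\{1\}$ we have $\bar u=\bar x^{(\pi(p)/k)j}$ with $1\le j\le k-1$, and $\pi(p)\nmid(\pi(p)/k)j$ forces $\bar u\ne1$. In a field $\bar u\ne1$ means $\bar u-1\ne0$, i.e. $\bar u-1$ is a unit; hence $u-1$ is a unit in $R$ and Lemma \ref{ring} delivers the required Ferrero $(p^4,k,k-1)$-DDF. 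This is exactly the family that escapes Corollary \ref{EA} and Corollary \ref{cyclic} whenever $k\nmid p-1$.

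The split case is where the naive recipe fails, and this is the main obstacle: when $t^2-t-1=(t-\lambda)(t-\mu)$ splits over $\mathbb{F}_p$ the eigenvalues $\lambda,\mu$ usually have different multiplicative orders, so a power $\bar x^{(\pi(p)/k)j}$ can equal $1$ in one coordinate of $\mathbb{F}_p\times\mathbb{F}_p$, making the corresponding $u-1$ a zero divisor rather than a unit (already at $p=11,\,k=2$). Rather than repair this, I would sidestep it: in the split case $p\equiv\pm1\pmod{10}$, property $(P_3)$ forces $\pi(p)\mid p-1$, hence $k\mid p-1$, i.e. $p\equiv1\pmod k$; since $p$ is the only prime factor of $v=p^4$, Corollary \ref{cyclic} already furnishes a Ferrero $(p^4,k,k-1)$-DDF in the abelian group $\mathbb{Z}_{p^2}\times\mathbb{Z}_{p^2}$. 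Thus the only genuinely new content lies in the non-split case, where the crux is the identification of the order of $x$ with $\pi(p^2)$ together with the lifting-of-units observation.
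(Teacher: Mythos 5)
Your proof is correct, and it constructs the same Ferrero pair as the paper but packages the verification quite differently. The paper works with the Fibonacci matrix $\mathbf{F}$ in $GL_2(\mathbb{Z}_{p^2})$, feeds the order-$\pi(p)$ subgroup it generates directly into Theorem \ref{Frob}, and proves semiregularity spectrally: for $p\equiv\pm3\pmod{10}$ the two eigenvalues of $\mathbf{F}$ are Galois-conjugate in $\mathbb{F}_{p^2}$, hence have equal multiplicative order, so by $(P_2)$ no proper power of $\Phi$ admits the eigenvalue $1$. You instead pass to the commutative ring $\mathbb{Z}_{p^2}[t]/(t^2-t-1)$ --- whose regular representation in the basis $\{t,1\}$ is exactly $\mathbf{F}$ --- and invoke Lemma \ref{ring}: the identity $x^n=F_nx+F_{n-1}$ pins down the order of $x$ as $\pi(p^2)$ without appealing to $(P_2)$, and the condition $u-1\in U(R)$ is checked by lifting units through the nil ideal $pR$ down to the field $\mathbb{F}_{p^2}$. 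This buys a uniform treatment of $p=2$ (the paper falls back on the explicit $(16,3,2)$-DDF) and replaces the eigenvalue computation by a one-line field argument; both proofs dispose of $p\equiv\pm1\pmod{10}$ identically via $(P_3)$ and Corollary \ref{cyclic}, and your remark that the split case genuinely breaks the direct construction (e.g.\ $p=11$, $k=2$) explains why that detour is unavoidable. One small point you leave implicit: when $\pi(p^2)=p\,\pi(p)$ the reduction of $x^{\pi(p^2)/k}$ is the $p$-th power of $\bar x^{\pi(p)/k}$, so concluding that the image of $U$ still has order $k$ (equivalently that $\bar u\neq 1$ for $u\neq 1$) requires $\gcd(k,p)=1$; this follows from $(P_3)$ exactly as the paper notes for its own argument, but it deserves a word.
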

\begin{proof} An example of a $(16,3,2)$-DDF in $\mathbb{Z}_{4}\times\mathbb{Z}_{4}$ has been given at the end of the previous section,
therefore the assertion is true for $p=2$. For $p\equiv\pm1$ (mod 10) the assertion is an immediate consequence of Corollary \ref{cyclic}
and property $(P_3)$. So, in the following, we will assume that $p\equiv\pm3$ $($mod $10)$. This implies
that $5$ is not a square in $\mathbb{F}_p$, hence the two eigenvalues $\lambda_1$, $\lambda_2$ of $\bf F$ are ``conjugates" in 
$\mathbb{F}_{p^2}$. Indeed we have $\{\lambda_1,\lambda_2\}=\{{1+\sqrt{5}\over2},{1-\sqrt{5}\over2}\}$. So the $i$-th powers of $\lambda_1$ and $\lambda_2$
are conjugates as well. It follows that $\lambda_1^i=1$ if and only if $\lambda_2^i=1$ which clearly implies that $\lambda_1$ and $\lambda_2$ 
have the same period in  the multiplicative group of $\mathbb{F}_{p^2}$.

Let us identify any matrix $\begin{pmatrix}a&b\cr c&d\end{pmatrix}\in GL_2(\mathbb{Z}_{p^2})$ 
with the automorphism of  $\mathbb{Z}_{p^2}\times\mathbb{Z}_{p^2}$ mapping $(x,y)$ into $(ax+by,cx+dy)$.

The period of the  Fibonacci matrix $\bf F$
in $GL_2(\mathbb{Z}_{p^2})$ is equal to $\pi(p^2)$ which, by property $(P_1)$, is equal either to $p\pi(p)$ or $\pi(p)$. 
Let $A$ be the subgroup of order $\pi(p)$ of the group generated by
$\bf F$. Thus $A=\langle \Phi\rangle$ with $\Phi=\bf F$ or $\Phi= {\bf F}^p$ according to whether $\pi(p^2)=\pi(p)$ or $\pi(p^2)=p\pi(p)$, respectively.

Assume that 1 is an eigenvalue of $\Phi^i$.
Then, considering that for any matrix $M$ and any positive integer $j$ we have $Spec(M^j)=\{\lambda^j \ | \ \lambda\in Spec(M)\}$, we have
\begin{center}
$\begin{cases}
\lambda_1^i=\lambda_2^i=1\quad\,\,\,\,\,\, \mbox{if $\pi(p^2)=\pi(p)$}\cr \lambda_1^{pi}=\lambda_2^{pi}=1\quad \mbox{ if $\pi(p^2)=p\pi(p)$}
\end{cases}$
\end{center}

Thus, by property $(P_2)$, we have that $\pi(p)$ divides $i$ in the former case while $\pi(p)$ divides $pi$ in the latter. 
Anyway $\pi(p)$ and $p$ are coprimes by property $(P_3)$ so that, in both cases, $i$ should be divisible by $\pi(p)$. 
Recalling that $\pi(p)$ is the order of $\Phi$, we conclude that $\Phi^i$ is the identity matrix. 

Now note that a matrix $M\in GL_2(\mathbb{Z}_{p^2})$ is fixed point free if and only if 1 does not belong to $Spec(M)$.
So we have proved that the group $A$ acts semiregularly on $\mathbb{Z}_{p^2}\times \mathbb{Z}_{p^2}\setminus\{(0,0)\}$,
i.e., $(\mathbb{Z}_{p^2}\times \mathbb{Z}_{p^2},A)$ is a Ferrero pair. Of course $(\mathbb{Z}_{p^2}\times \mathbb{Z}_{p^2},\langle \Phi^{\pi(p)/k}\rangle)$
is a Ferrero pair as well for each divisor $k$ of $\pi(p)$ and then the assertion follows from Theorem \ref{Frob}.
\end{proof}

The {\it Pisano DDFs}, namely the $(p^4,k,k-1)$-DDFs which are built as in the proof of the above proposition, 
allow to largely enrich the set of known values of $k$ for which there exists a $(p^4,k,k-1)$-DDF in  $\mathbb{Z}_{p^2}\times \mathbb{Z}_{p^2}$
in the case of $p\equiv\pm3$ (mod 10).
In particular, for a Mersenne prime $p=2^{4n+3}-1$ we have $p\equiv-3$ (mod 10) and then we necessarily have $\pi(p)=2^{4n+4}$ by property $(P_4)$.
Thus there exists a Pisano $(p^4,2^i,2^i-1)$-DDF in $\mathbb{Z}_{p^2}\times \mathbb{Z}_{p^2}$ for $1\leq i\leq 4n+4$.

\medskip
As an example, let us apply Proposition \ref{pisano} with $p=3$. We have $\pi(3)=6$ and $\pi(3^2)=3\cdot\pi(3)=18$.
Then the matrix $\Phi$ considered in the proof of the above proposition is ${\bf F}^3=\begin{pmatrix}3&2\cr2&1\end{pmatrix}$
and the group $A$ generated by $\Phi$ is the following:
$$\biggl{\{}
\begin{pmatrix}3&2\cr2&1\end{pmatrix},
\begin{pmatrix}4&8\cr8&5\end{pmatrix},
\begin{pmatrix}1&7\cr7&3\end{pmatrix},
\begin{pmatrix}8&0\cr0&8\end{pmatrix},
\begin{pmatrix}6&7\cr7&8\end{pmatrix},
\begin{pmatrix}3&2\cr2&1\end{pmatrix},
\begin{pmatrix}5&1\cr1&4\end{pmatrix},
\begin{pmatrix}1&0\cr0&1\end{pmatrix}
\biggl{\}}$$ 
The ten orbits of $A$ on $\mathbb{Z}_9\times \mathbb{Z}_9\setminus\{(0,0)\}$ are listed below where, again,
any pair $(x,y)$ will be simply denoted by $xy$:
$$B_0=\{21, 85, 73, 08, 78, 14, 26, 01\};\quad B_1=\{42,71,56,07,57,28,43,02\};$$
$$B_2=\{63,66,30,06,36,33,60,03\};\quad B_3=\{04,52,13,05,15,47,86,04\};$$
$$B_4=\{32,48,17,80,67,51,82,10\};\quad B_5=\{53,34,81,88,46,65,18,11\};$$
$$B_6=\{74,20,64,87,25,70,35,12\};\quad B_7=\{68,72,77,83,31,27,22,16\};$$
$$B_8=\{37,54,55,76,62,45,44,23\};\quad B_9=\{58,40,38,75,41,50,61,24\}.$$
Thus the $B_i$s are the blocks of a Pisano $(81,8,7)$-DDF in $\mathbb{Z}_9\times \mathbb{Z}_9$.

\subsection{Non-abelian disjoint $(v,k,k-1)$ difference families}

Here we give some constructions for DDFs in non-abelian groups.
Let us start by giving a class of non-abelian Ferrero DDFs.

\begin{proposition}\label{notabelian}
If $R=(V,+,\cdot)$ is a ring with unity admitting a group $U$ of units 
such that $u^2-1\in U(R)$ for each $u\in U\setminus\{1\}$, then there exists a non-abelian Ferrero $(v^3,k,k-1)$-DDF with $v=|V|$ and $k=|U|$.
\end{proposition}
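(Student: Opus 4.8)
The plan is to produce a non-abelian Ferrero pair $(G,A)$ with $ord(G)=v^3$ and $ord(A)=k$ and then invoke Theorem \ref{Frob}. For $G$ I would take the Heisenberg group over $R$: the underlying set $V^3$ of triples, equipped with the multiplication $(a,b,c)\cdot(a',b',c')=(a+a',\,b+b',\,c+c'+ab')$. Its identity is $(0,0,0)$, the inverse of $(a,b,c)$ is $(-a,-b,-c+ab)$, and computing the two products $(1,0,0)(0,1,0)=(1,1,1)$ and $(0,1,0)(1,0,0)=(1,1,0)$ shows that $G$ is non-abelian as soon as $1\neq0$ in $R$. Since $|G|=|V|^3=v^3$, this group has exactly the required order.

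For the complement I would associate to each unit $u\in U$ the map $\phi_u(a,b,c)=(ua,\,ub,\,u^2c)$. The first thing to check is that $\phi_u$ is an automorphism of $G$: expanding $\phi_u\big((a,b,c)(a',b',c')\big)$ and $\phi_u(a,b,c)\,\phi_u(a',b',c')$ and comparing third coordinates reduces everything to the identity $u^2ab'=(ua)(ub')$, which holds because the units of $R$ are central (automatic when $R$ is commutative, as in all the examples of interest). The same centrality gives $\phi_u\phi_{u'}=\phi_{uu'}$, so $u\mapsto\phi_u$ is a homomorphism from $U$ into $Aut(G)$; since $\phi_u(1,0,0)=(u,0,0)$, the equality $\phi_u=id$ forces $u=1$, so the homomorphism is injective. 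Hence $A:=\{\phi_u\mid u\in U\}$ is a subgroup of $Aut(G)$ with $ord(A)=|U|=k$.

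The heart of the argument is semiregularity, and this is where the hypothesis $u^2-1\in U(R)$ is used. I would first record the elementary fact that $u^2-1\in U(R)$ already forces $u-1\in U(R)$: writing $u^2-1=(u-1)(u+1)=(u+1)(u-1)$, an inverse $w$ of $u^2-1$ supplies both a right inverse $(u+1)w$ and a left inverse $w(u+1)$ for $u-1$. Now suppose $u\neq1$ and $\phi_u(a,b,c)=(a,b,c)$. Coordinatewise this reads $(u-1)a=0$, $(u-1)b=0$ and $(u^2-1)c=0$; since $u-1$ and $u^2-1$ are units, we conclude $a=b=c=0$. Thus every non-identity $\phi_u$ fixes only the identity of $G$, so $A$ acts semiregularly on $G\setminus\{(0,0,0)\}$ and $(G,A)$ is a Ferrero pair of parameters $(v^3,k)$. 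Applying Theorem \ref{Frob} then yields a $(v^3,k,k-1)$-DDF, which is non-abelian because $G$ is.

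The only routine-but-delicate point to watch is the automorphism verification, where centrality of the units is invoked; I would make the commutativity (or centrality) assumption on $R$ explicit wherever it is needed. The genuinely substantive step, and the reason the hypothesis is phrased with $u^2-1$ rather than the $u-1$ of Lemma \ref{ring}, is that the central coordinate of the Heisenberg group transforms \emph{quadratically} under $\phi_u$: one needs $u^2-1$ to be a unit to kill the third coordinate of a hypothetical fixed point, while the implication $u^2-1\in U(R)\Rightarrow u-1\in U(R)$ simultaneously handles the two linear coordinates. This single hypothesis therefore controls all three coordinates at once, which is exactly what makes the non-abelian construction go through.
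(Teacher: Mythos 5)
Your proposal is correct and follows essentially the same route as the paper: the same group $(V^3,\oplus)$ with $(x_1,y_1,z_1)\oplus(x_2,y_2,z_2)=(x_1+x_2,y_1+y_2,z_1+z_2+x_1\cdot y_2)$, the same automorphisms $(x,y,z)\mapsto(u\cdot x,u\cdot y,u^2\cdot z)$, and the same appeal to Theorem \ref{Frob}. You in fact supply details the paper leaves implicit --- the deduction $u^2-1\in U(R)\Rightarrow u-1\in U(R)$ and the observation that the automorphism check needs the units of $R$ to be central --- both of which are correct and worth making explicit.
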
 
\begin{proof}
Let us equip the set $V^3$ with the operation $\oplus$ defined by the rule
$$(x_1,y_1,z_1)\oplus (x_2,y_2,z_2)=(x_1+x_2, \ y_1+y_2, \ z_1+z_2+x_1\cdot y_2).$$
It is an easy exercise to check that $G=(V^3,\oplus)$ is a group. Also note that $G$ is non-abelian
since we have, for instance, $(0,1,0)\oplus(1,0,0)=(1,1,0)$ while $(1,0,0)\oplus(0,1,0)=(1,1,1)$.
Now note that for each $u\in U$, the map $\alpha_u: (x,y,z)\in V^3 \longrightarrow (u\cdot x,u\cdot y,u^2\cdot z)\in V^3$
is an automorphism of $G$ and that $(G,A:=\{\alpha_u \ | \ u\in U\})$ is a Ferrero pair. The assertion then follows
from Theorem \ref{Frob}. 
\end{proof}

We remark that a group $U$ as in the statement of the above proposition is necessarily of odd order.
Indeed, in the opposite case, $U$ would have at least one involution, say $u$, and then $u^2-1=0\notin U(R)$
against the assumption.

Applying Proposition \ref{notabelian} with $R=\mathbb{F}_q$ we obtain the following.

\begin{corollary}
There exists a non-abelian Ferrero $(q^3,k,k-1)$-DDF
for any pair $(q,k)$ with $q$ a prime power and $k$ any odd divisor of $q-1$.
\end{corollary}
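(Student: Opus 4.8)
The plan is to apply Proposition \ref{notabelian} with $R=\mathbb{F}_q$, so that $v=|V|=q$ and the resulting family will have the desired parameters $(q^3,k,k-1)$. The entire task thus reduces to exhibiting, inside $\mathbb{F}_q$, a group $U$ of units of order $k$ satisfying the hypothesis of that proposition, namely $u^2-1\in U(\mathbb{F}_q)$ for every $u\in U\setminus\{1\}$.

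First I would recall that the group of units of $\mathbb{F}_q$ is the full multiplicative group $\mathbb{F}_q^{*}$, which is cyclic of order $q-1$. Since $k$ divides $q-1$ by hypothesis, there is a (unique, cyclic) subgroup $U$ of $\mathbb{F}_q^{*}$ of order $k$, and this will be my candidate group of units. Because $\mathbb{F}_q$ is a field, the membership $u^2-1\in U(\mathbb{F}_q)$ is equivalent to the condition $u^2-1\neq0$, i.e. $u\neq\pm1$.

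The key step is the verification of this condition, and it is exactly here that the oddness of $k$ is used. For $u\in U\setminus\{1\}$ the case $u=1$ is already excluded, so only $u=-1$ must be ruled out. If $q$ is odd, then $-1$ is an element of order $2$ in $\mathbb{F}_q^{*}$, and a group of odd order $k$ cannot contain an element of order $2$, whence $-1\notin U$. If $q$ is even, then $-1=1$ in $\mathbb{F}_q$, so $u^2-1=0$ forces $u=1$, which is again excluded. In both cases $u^2-1\neq0$ for all $u\in U\setminus\{1\}$, so the hypothesis of Proposition \ref{notabelian} is met.

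With the hypothesis established, the conclusion is immediate from Proposition \ref{notabelian}, which yields a non-abelian Ferrero $(q^3,k,k-1)$-DDF. I do not expect a genuine obstacle here: the whole content is the bookkeeping observation that an odd-order subgroup of $\mathbb{F}_q^{*}$ avoids the unique involution $-1$, which is precisely what the condition $u^2-1\in U(R)$ asks for.
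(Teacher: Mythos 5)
Your proposal is correct and follows exactly the paper's route: the paper obtains the corollary by applying Proposition \ref{notabelian} with $R=\mathbb{F}_q$, and the verification you supply (that an odd-order subgroup of the cyclic group $\mathbb{F}_q^{*}$ avoids the unique involution $-1$, so $u^2-1\neq 0$ for $u\neq 1$) is precisely the implicit bookkeeping, already foreshadowed by the paper's remark that such a $U$ must have odd order.
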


Let $\cal F$ be the $(v^3,k,k-1)$-DDF in $(V^3,\oplus)$ obtainable by Proposition \ref{notabelian}. We remark that $\cal F$
actually coincides with the $(v^3,k,k-1)$-DDF in the abelian group of the ring $R\times R\times R$ obtainable using Lemma \ref{ring}. 
On the other hand to consider $\cal F$ as a DDF in $(V^3,\oplus)$ is not the same as to consider $\cal F$ as a DDF in $(V^3,+)$. 
Indeed the $(v^3,k,k-1)$-NRB whose near resolution is the orbit of $\cal F$ under $(V^3,\oplus)$ does not coincide with the $(v^3,k,k-1)$-NRB 
whose near resolution is the orbit of $\cal F$ under $(V^3,+)$.

\medskip
By Corollary \ref{cyclic} there exists a $(v,k,k-1)$-DDF in any abelian group $G$ of order $v$
provided that all primes in $v$ are congruent to 1 (mod $k$). We are going to see that this result
remains true if one removes the hypothesis of commutativity of $G$. The present author proved that the existence 
of a $(p,k,\lambda)$-DF for any prime $p$ in $v$ implies the existence of a
$(v,k,\lambda)$-DF in any group $G$ of order $v$ (see Corollary 5.5 in \cite{B1}). Now we reprove 
this theorem showing that if all component $(p,k,\lambda)$-DFs are disjoint, then the resultant 
$(v,k,\lambda)$-DF in $G$ is disjoint as well.

\begin{theorem}\label{composingDDFs}
If $G$ is a group of order $v$ and there exists a $(p,k,\lambda)$-DF (resp. DDF) for every prime factor $p$ of $v$,
then there exists a $(v,k,\lambda)$-DF (resp. DDF) in $G$. 
\end{theorem}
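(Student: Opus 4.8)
The plan is to argue by induction on the number $\Omega(v)$ of prime factors of $v$ counted with multiplicity, reusing the composition construction that already underlies Corollary 5.5 of \cite{B1} and checking, at each step, that disjointness of the ingredients is inherited by the output. When $\Omega(v)=1$ the group $G$ is cyclic of prime order and the hypothesis supplies the desired family outright, so the base case is immediate. For the inductive step I would pick a proper non-trivial normal subgroup $N\trianglelefteq G$ together with the quotient $G/N$, and set $n=|N|$, $m=v/n$. Both $N$ and $G/N$ are groups of order strictly smaller than $v$ whose prime divisors divide $v$, so the induction hypothesis furnishes an $(n,k,\lambda)$-DDF $\mathcal{D}$ inside $N$ and an $(m,k,\lambda)$-DDF $\mathcal{B}$ inside $G/N$. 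The cleanest case is when $N$ can be taken elementary abelian (a minimal normal subgroup when $G$ is solvable, which is in particular the situation of the paper's main application, where all prime factors of $v$ are $\equiv 1\pmod k$ forces $v$ odd, hence $G$ solvable by Feit--Thompson).

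The composition itself I would carry out as follows. The blocks of $\mathcal{D}$ are kept untouched: living inside $N$, they cover $N\setminus\{0\}$ exactly $\lambda$ times and are pairwise disjoint. To cover the differences lying outside $N$, I would lift each block $B=\{\bar b_1,\dots,\bar b_k\}$ of $\mathcal{B}$ to representatives $b_1,\dots,b_k\in G$, one in each corresponding coset of $N$, and then spread these lifts across the cosets by means of a difference matrix over $N$ all of whose rows are \emph{permutations} of $N$; such a matrix is available when $N$ is elementary abelian, being essentially the multiplication table of the field $\mathbb{F}_{n}$. Translating the $j$-th representative by the $(j,c)$-entry of the matrix yields, for each column $c$, a transversal-type block all of whose internal differences lie outside $N$; summing over the columns and over $\mathcal{B}$ realises every non-trivial coset exactly $\lambda$ times in the list of differences. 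That the resulting collection is a $(v,k,\lambda)$-DF is exactly the content of Corollary 5.5 of \cite{B1}, so I would invoke that counting rather than redo it.

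What is genuinely new, and where I would concentrate, is the disjointness bookkeeping. The blocks coming from $\mathcal{D}$ are confined to $N$, while every lifted block lies entirely in $G\setminus N$, so the two families cannot meet. Among the lifted blocks, a point of a fixed non-trivial coset $\bar c$ can only be produced from a position $(B,j)$ with $\bar b_j=\bar c$; since $\mathcal{B}$ is \emph{disjoint}, the coset $\bar c$ occurs as such a $\bar b_j$ for at most one block $B$ and one index $j$, and the permutation property of the difference matrix then forces the $n$ translates deposited in $\bar c$ to be pairwise distinct. Hence no point of $G$ is covered twice and the composed family is a DDF. The main obstacle I anticipate is the interaction between non-commutativity and this coset bookkeeping: one must verify that the internal differences of a lifted block genuinely stay outside $N$ and that the permutation difference matrix still behaves correctly when $N$ is only normal rather than central. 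For solvable $G$ the minimal-normal-subgroup reduction keeps $N$ elementary abelian and sidesteps this; the remaining case, in which $G$ possesses no convenient normal subgroup, is the delicate point, which I would dispose of exactly along the lines already used for the non-disjoint version in \cite{B1}, now simply tracking that the auxiliary transversal blocks are placed injectively.
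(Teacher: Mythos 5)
Your construction is recognizably the same composition as the paper's, just parametrized differently, and your disjointness bookkeeping is exactly the right new ingredient. The paper does not pass to a minimal normal subgroup: it takes $N$ normal of \emph{prime index} $p$, uses the $(p,k,\lambda)$-DF in $G/N\cong\Z_p$ supplied directly by the hypothesis, and spreads each lifted block over $N$ by the explicit rule $B(n)=\{g_i+in \ | \ 1\le i\le k\}$. This is your permutation difference matrix with $(i,n)$-entry $in$, and it is a valid choice over an \emph{arbitrary} normal subgroup $N$, not just an elementary abelian one, because every prime dividing $|N|$ exceeds $k$, so $n\mapsto in$ and $n\mapsto (i-j)n$ are bijections of $N$. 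That choice buys the paper a uniform induction with no case distinction on the structure of $N$, whereas your field-based matrix forces you to insist that $N$ be elementary abelian. Your argument that at most one position $(B,j)$ deposits points into a given coset, and that the permutation rows make those deposits injective, is precisely the paper's disjointness proof in matrix language.

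The one genuine gap is the ``remaining case'' you leave dangling. You justify solvability of $G$ only under the hypothesis of the final corollary (all prime factors $\equiv 1 \pmod{k}$) and otherwise defer vaguely to \cite{B1}. But that case never arises: for $k>2$ the existence of a $(p,k,\lambda)$-DF already forces $p\ge k>2$ for every prime factor $p$ of $v$, so $v$ is odd and $G$ is solvable by Feit--Thompson (the cases $k\le 2$ being trivial); this observation is what the paper uses both to produce a normal subgroup of prime index at every stage and to get the coprimality that makes its matrix work. You should state it rather than gesture at an unhandled case. Secondly, you outsource the verification that every element of $G\setminus N$ is covered exactly $\lambda$ times to Corollary 5.5 of \cite{B1}; to make the proof self-contained you need the short computation that, by normality of $N$, the row-difference property turns the differences from the position pair $(i,j)$ of a lifted block, summed over all columns, into the full coset $(b_i-b_j)+N$ (the paper instead shows each element is covered at least $\lambda$ times and finishes by counting $|\Delta{\cal F}|=\lambda(v-1)$).
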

\begin{proof}
The cases $k=1$ and $k=2$ are trivial. So, in the following, we assume $k>2$.
We prove the theorem by induction on $v$. The assertion is trivially true for $v=1$; in this case the required DF is the empty family. 
Let $G$ be a group of order $v>1$ as in the statement and assume that the assertion is true for all groups of order less than $v$.
First observe that $v$ is necessarily odd since the existence of a $(p,k,\lambda)$-DF for any prime factor $p$ of $v$
implies that $p\geq k>2$ for any such prime $p$.
It follows, by the Feit-Thompson theorem, that $v$ is solvable. Thus, in particular, $G$ has a normal subgroup $N$ of prime index, say $p$.
By hypothesis there exists a $(p,k,\lambda)$-DF (resp. DDF), say ${\cal F}_1$, in $G/N$. By induction,
there also exists a $({v\over p},k,\lambda)$-DF (resp. DDF), say ${\cal F}_2$, in $N$. 
For each block $B=\{g_1+N,\dots,g_k+N\}\in{\cal F}_1$
and any $n\in N$ consider the $k$-subset $B(n)$ of $G$ defined by
$B(n)=\{g_i+in \ | \ 1\leq i\leq k\}$. We claim that $${\cal F}:=\{B(n) \ | \ B\in{\cal F}_1, n\in N\} \ \cup \ {\cal F}_2$$
is a $(v,k,\lambda)$-DF (resp. DDF) in $G$.

Given $g\in G\setminus N$, let $g+N=(g_i+N)-(g_j+N)$ be a representation of $g+N$ as a difference from 
a block $B=\{g_1+N,\dots,g_k+N\}$ of ${\cal F}_1$.
Consider the element $n:=-g_i+g+g_j$, necessarily belonging to $N$, and let $ord(n)$ be its order.
We have $1\leq |i-j|<k$, hence $i-j$ is coprime with $ord(n)$ since, by assumption,
every divisor of $ord(G)$ distinct by 1 is clearly greater than $k$. Thus there exists the inverse, say $x$, of $i-j$ modulo $ord(n)$.
Now check that $g$ is the difference between the $i$-th element and the $j$-th element of the block $B(xn)$ of ${\cal F}_1$:
$$(g_i+ixn)-(g_j+jxn)=g_i+(i-j)xn-g_j=g_i+n-g_j=g_i-g_i+g+g_j-g_j=g.$$
In this way we have proved that each of the $\lambda$ representations of $g+N$ as a difference from ${\cal F}_1$
leads to a representation of $g$ as a difference from ${\cal F}$. Thus every element of $G\setminus N$ is covered at
least $\lambda$ times by $\Delta{\cal F}$. The same is true for all elements of $N\setminus\{0\}$ since 
$\Delta{\cal F}_2$ is $\lambda$ times $N\setminus\{0\}$.
Now note that the number of blocks of ${\cal F}$ is given by 
\begin{center}
$|{\cal F}|=|{\cal F}_1|\cdot|N|+|{\cal F}_2|={\lambda(p-1)\over k(k-1)}\cdot{v\over p}+{\lambda\over k(k-1)}({v\over p}-1)={\lambda(v-1)\over k(k-1)}$
\end{center}
and then $|\Delta{\cal F}|=k(k-1)|{\cal F}|=\lambda(v-1)$. It follows, by the pigeon hole principle, that 
every non-zero element of $G$ is covered by $\Delta{\cal F}$ exactly $\lambda$ times, i.e., ${\cal F}$ is a 
$(v,k,\lambda)$-DF. 

It remains to prove that ${\cal F}$ is disjoint in the hypothesis that both ${\cal F}_1$ and ${\cal F}_2$ are disjoint. 

Every block of the form $B(n)$ with $B=\{g_1+N,\dots,g_k+N\}\in{\cal F}_1$ and $n\in N$ has no element in $N$ 
otherwise we would have $g_i+in\in N$ for some $i$, hence $g_i+N=N$
contradicting the fact that the blocks of ${\cal F}_1$ partition $G/N\setminus\{N\}$.
Thus $B(n)$ is disjoint with every block of ${\cal F}_2$.

Now assume that $B(n)$ and $B'(n')$ have an element in common for some blocks $B=\{g_1+N,\dots,g_k+N\}$ and
$B'=\{g'_1+N,\dots,g'_k+N\}$ of ${\cal F}_1$ and some elements $n$, $n'$ of $N$.
Thus we have $g_i+in=g'_j+jn'$ for suitable $i, j\in\{1,\dots,k\}$. This implies that $g_i+N=g'_j+N$,
hence $B=B'$ and $i=j$ since ${\cal F}_1$ is disjoint. It follows that $i(n-n')=0$, hence $ord(n-n')$ is a divisor of $i$
which implies $ord(n-n')=1$ since every divisor of $ord(G)$ distinct by 1 is greater than $k$. We conclude that 
$B=B'$ and $n=n'$, i.e., $B(n)=B'(n')$.

Finally any two distinct blocks of ${\cal F}_2$ are disjoint by assumption. The assertion follows.
\end{proof}

We are now finally able to prove the main result of this section.

\begin{corollary}
If all prime divisors of $v$ are congruent to $1$ $($mod $k)$, then there exists a $(v,k,k-1)$-DDF and a  $(v,k,{k-1\over2})$-DDF in $G$
for any group $G$ of order $v$.
\end{corollary}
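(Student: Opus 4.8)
The plan is to reduce the claim, via the composition machinery of Theorem~\ref{composingDDFs}, to the single-prime case, where everything needed is already available from the earlier sections. Concretely, I would show that for \emph{each} prime $p$ dividing $v$ there is a $(p,k,k-1)$-DDF, and—when $k$ is odd—also a $(p,k,\frac{k-1}{2})$-DDF, and then invoke Theorem~\ref{composingDDFs} with $\lambda=k-1$ (respectively $\lambda=\frac{k-1}{2}$) to transport these to an arbitrary group $G$ of order $v$.

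First I would establish the single-prime families. Fix a prime $p\mid v$; by hypothesis $p\equiv1\pmod{k}$, so Corollary~\ref{cyclic} applied to the cyclic group $\mathbb{Z}_p$ (equivalently, result (iii) of Section~2) produces a Ferrero $(p,k,k-1)$-DDF, namely the set of orbits of a Ferrero pair $(\mathbb{Z}_p,A)$ with $|A|=k$. Since this holds for every prime factor of $v$, Theorem~\ref{composingDDFs} with $\lambda=k-1$ immediately yields a $(v,k,k-1)$-DDF in any group $G$ of order $v$, which is the first assertion.

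For the second assertion I first note that it is only meaningful when $k$ is odd, so that $\frac{k-1}{2}$ is an integer; I would make this a standing assumption. The key observation is that the hypothesis forces every prime factor $p$ of $v$ to be odd, since for $k\geq2$ the congruence $p\equiv1\pmod{k}$ rules out $p=2$. Hence $pk$ is odd while $\mathbb{Z}_p$ is abelian, so the splitting clause of Theorem~\ref{Frob} applies (with $p$ in the role of $v$) to the Ferrero pair $(\mathbb{Z}_p,A)$ of the previous paragraph and breaks its $(p,k,k-1)$-DDF into two $(p,k,\frac{k-1}{2})$-DDFs in $\mathbb{Z}_p$. Thus a $(p,k,\frac{k-1}{2})$-DDF exists for every prime $p\mid v$, and a second application of Theorem~\ref{composingDDFs}, now with $\lambda=\frac{k-1}{2}$, delivers the desired $(v,k,\frac{k-1}{2})$-DDF in $G$.

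I do not expect any genuine obstacle here, since all the substantive work has already been carried out in Theorem~\ref{Frob} (the Ferrero construction together with its splitting) and in Theorem~\ref{composingDDFs} (the inductive composition along a normal series); this corollary amounts to verifying that their hypotheses are met. The only points demanding a little care are bookkeeping ones: confirming that $v$, and—for the halved family—also $k$, is odd so that the splitting hypothesis ``$pk$ odd'' of Theorem~\ref{Frob} holds, and checking that the index fed into Theorem~\ref{composingDDFs} is exactly $k-1$ (respectively $\frac{k-1}{2}$), so that the composed family carries the advertised index.
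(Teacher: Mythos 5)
Your proposal is correct and takes essentially the same route as the paper: obtain a $(p,k,k-1)$-DDF for each prime $p\mid v$ from the Ferrero construction and a $(p,k,{k-1\over2})$-DDF from the splitting clause of Theorem~\ref{Frob}, then compose them over all prime factors via Theorem~\ref{composingDDFs}. The paper's own proof is a one-liner that leaves implicit the parity bookkeeping ($p$ and $k$ odd) which you correctly spell out.
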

\begin{proof}
We know that there exists a $(p,k,k-1)$-DDF and a $(p,k,{k-1\over2})$ for any prime $p\equiv1$ (mod $k$). Then the assertion immediately follows from Theorem \ref{composingDDFs}.
\end{proof}

\normalsize
\section*{Acknowledgement}
This work has been performed under the auspices of the G.N.S.A.G.A. of the
C.N.R. (National Research Council) of Italy.

% \begin{theorem}
% A map $f: G\longrightarrow H$ is zero difference balanced if and only if the set of non-empty fibers of $f$ is a partitioned difference family in $G$. 
%\end{theorem} 
%\noindent{\bf Proof.}\quad Let $\cal B$ be the set of non-empty fibers of $f$. For every $g\in G\setminus\{0\}$, let $\lambda(g)$ be the multiplicity of $g$ in $\Delta{\cal B}$ and let $\mu(g)$ be the number of solutions of the equation $f(g+x)-f(x)=0$. 

%By definition of fibers, $\cal B$ is a partition of $G$ and then to say that $\cal B$ is a partitioned difference family means that $\lambda(g)$ is a constant. Also, to say that $f$ is zero difference balanced means that $\mu(g)$ is a constant. Thus, for proving the assertion, it is enough to show that $\lambda(g)=\mu(g)$ for every $g\in G\setminus\{0\}$.

% If $g=b'-b$ is one of the $\lambda(g)$ representations of $g$ as a difference from $\cal B$, then $b, b'$ belong to the same fiber of $f$ and $b'=g+b$. Thus we have $f(b)=f(b')=f(g+b)$, hence $b$ is one of the $\mu(g)$ solutions of the equation $f(g+x)-f(x)=0$. It follows that $\lambda(g)\leq\mu(g)$.

% Conversely, if $b$ is one of the $\mu(g)$ solutions of $f(g+x)-f(x)=0$, then $b':=g+b$ and $b$ are in the same fiber of $f$, hence $g=b'-b$ is a representation of $g$ as a difference from $\cal B$. Thus $\mu(g)\leq\lambda(g)$. We conclude that $\lambda(g)=\mu(g)$.\hfill$\Box$

\end{document}